\documentclass[reqno, 12pt]{amsart}

\usepackage{amsmath,amssymb,amsthm,amsfonts,mathrsfs}
\usepackage{fullpage}
\usepackage{relsize}
\usepackage{enumerate}
\usepackage{colonequals}
\usepackage[colorlinks=true,
linkcolor=blue,
anchorcolor=blue,
citecolor=red
]{hyperref}
\allowdisplaybreaks 
\newtheorem{theorem}{Theorem}[section]

\newtheorem{corollary}[theorem]{Corollary}

\theoremstyle{definition}

\theoremstyle{remark}


\newcommand{\N}{\mathbb{N}}
\newcommand{\Z}{\mathbb{Z}}

\newcommand{\C}{\mathbb{C}}

\newcommand{\on}{\operatorname}
\newcommand{\Li}{\on{Li}}
\renewcommand{\Re}{\on{Re}}

\renewcommand{\mod}[1]{\,(\on{mod}#1)}
\newcommand{\of}[1]{\left(#1\right)}
\newcommand{\set}[1]{\left\{#1\right\}}

\usepackage{xcolor}



\author{Biao Wang}
\address{School of Mathematics and Statistics, Yunnan University, Kunming, Yunnan 650091, China}
\email{bwang@ynu.edu.cn}

\date{\today}

\makeatletter
\@namedef{subjclassname@2020}{\textup{}2020 Mathematics Subject Classification}
\makeatother

\title[Titchmarsh divisor problem]{A generalization of the Titchmarsh divisor problem}
\subjclass[2020]{11N37, 11L20}
\keywords{Titchmarsh divisor problem, unitary divisor function, $k$-free divisor function.}

\begin{document}
	
\begin{abstract} Let $d^{(k)}(n)$ be the $k$-free divisor function for integer $k\ge2$. Let $a$ be a nonzero integer. In this paper, we establish an asymptotic formula
\begin{equation*}
	\sum_{p\leq x} d^{(k)}(p-a) =b_k \cdot x+O\left(\frac{x}{\log x}\right)
\end{equation*}
related to the  Titchmarsh divisor problem, where $b_k$ is a positive constant dependent on $k$ and $a$. For the proof, we apply a result of Felix and show a general asymptotic formula for a class of arithmetic functions including the unitary divisor function,  $k$-free divisor function and the proper Pillai's function.
\end{abstract}

\maketitle

\section{Introduction}

Let $d(n)$ be the divisor function. In 1930, under the Generalized Riemann Hypothesis, Titchmarsh \cite{Titchmarsh1930} proved that
\begin{equation}\label{eqn_Titchmarsh}
	\sum_{p\leq x} d(p-a)= \frac{\zeta(2)\zeta(3)}{\zeta(6)}\prod_{p\mid a}\left(1-\frac{p}{p^2-p+1}\right)\cdot x+O\of{\frac{x\log\log x}{\log x}}
\end{equation}
for any fixed non-zero integer $a$, where $p$ denotes a prime number. This leads to the study of the asymptotic behavior of the summation in \eqref{eqn_Titchmarsh}, which is known  as the \textit{Titchmarsh divisor problem}. 

In 1961, Linnik \cite{Linnik1961} proved \eqref{eqn_Titchmarsh} unconditionally with his dispersion method. Later, Halberstam \cite{Halberstam1967} gave a very short proof using the Bombieri-Vinogradov theorem. Moreover, Fouvry \cite[Corollaire 2]{Fouvry1985} and Bombieri, Friedlander and Iwaniec \cite[Corollary 1]{BFI1986} improved \eqref{eqn_Titchmarsh} to
\begin{equation}
	\sum_{p\leq x} d(p-a)= cx+c_0\Li(x)+O\of{\frac{x}{(\log x)^A}}
\end{equation}
for any $A\ge1$, where $c$ and $c_0$ are constants depending only on $a$, and $\Li(x)=\int_2^x\frac{dt}{\log t}$. After these results, a number of generalizations and analogues of the Titchmarsh divisor problem have shown in the literature, see \cite{Felix2012, Pitt2013, Pollack2016, Drappeau2017, Virdol2017, Xi2018, DrappeauTopacogullari2019, ABL2021} and so on.

In the analytic number theory, there are several arithmetic functions akin to the divisor function, say the unitary divisor function $d^\ast(n)$. Here, $d^\ast(n)$ is the number of unitary divisors of $n$, i.e., 
$d^\ast(n)=\sum_{ab=n, (a,b)=1}1$. A fact is that $d^\ast(n)$ is equal to the number of squarefree divisors of $n$ and $d^\ast(n)=2^{\omega(n)}$, where $\omega(n)$ is the number of distinct prime factors of $n$. We know that the Dirichlet series of the divisor function $d(n)$ is $\zeta^2(s)$, where $\zeta(s)=\sum_{n=1}^\infty \frac{1}{n^s}$ is the Riemann zeta functionn for $\Re s>1$.
  The Dirichlet series of $d^\ast(n)$ is closely related to $\zeta^2(s)$ and equals
\begin{equation}\label{eqn_unitary}
	\sum_{n=1}^\infty \frac{d^\ast(n)}{n^s}=\frac{\zeta^2(s)}{\zeta(2s)}.
\end{equation}

In general, let $d^{(k)}(n)$ be the number of $k$-free divisors of $n$ for integer $k\ge 2$. Then the  Dirichlet series of  $d^{(k)}(n)$ is equal to
\begin{equation}\label{eqn_k-free}
	\sum_{n=1}^\infty \frac{d^{(k)}(n)}{n^s}=\frac{\zeta^2(s)}{\zeta(ks)}.
\end{equation}

The asymptotic formulas for $\sum_{n\leq x} d^\ast(n)$ and $\sum_{n\leq x} d^{(k)}(n)$ have been widely studied in the literature, see \cite{Mertens1874, Cohen1960, GioiaVaidya1966,SubbaraoSuryanarayana1978,SuryanarayanaPrasad1971,Baker1996,Kumchev2000,FuruyaZhai2008} and so on. They are closely related to the Dirichlet divisor problem. However, to the best of our knowledge, the analogue of the Tichmarsh divisor problem for  the $k$-free divisor function has not been studied yet. In this paper, we will give a new generalization of Tichmarsh's result \eqref{eqn_Titchmarsh} and then provide an asymptotic formula for 
$$\sum_{p\leq x} d^{(k)}(p-a).$$

To unify \eqref{eqn_unitary} and \eqref{eqn_k-free}, we consider general arithmetic functions $f:\N\to \C$ with the Dirichlet series $F(s)=\sum_{n=1}^\infty\frac{f(n)}{n^s}$ absolutely convergent on $\Re s>1$, then 
$$\sum_{n=1}^\infty \frac{(f\ast d)(n)}{n^s}=F(s)\zeta^2(s),$$
where $f\ast d$ is the Dirichlet convolution between $f$ and $d$.
The following is our main result.

\begin{theorem}
\label{mainthm}
Let $f:\N\to\C$ be an arithmetic function. Let $a$ be an integer.   If the Dirichlet series of $f$ is absolutely convergent on $\Re s\geq 1-\alpha$ for some positive $\alpha >0$, then there exists some constant $c_f>0$ such that
\begin{equation}
\label{eqn_mainthm}
\sum_{p\leq x} (f\ast d)(p-a) =c_f \cdot x+O\left(\frac{x}{\log x}\right), 
\end{equation}
where
$c_f=\sum_{m=1}^\infty \frac{f(m)c_m}{m}$, and $c_m$ is defined in Theorem~\ref{thm_Felix2012} below.
The implied constant in \eqref{eqn_mainthm} depends only on $f$ and $a$.
\end{theorem}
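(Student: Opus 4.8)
The plan is to peel the factor $f$ off the convolution and feed the resulting divisor sums in arithmetic progressions into Theorem~\ref{thm_Felix2012}. Writing $(f\ast d)(n)=\sum_{m\mid n}f(m)\,d(n/m)$ and using that $m\mid p-a$ is equivalent to $p\equiv a\mod{m}$, I would interchange the order of summation to obtain
\begin{equation*}
\sum_{p\leq x}(f\ast d)(p-a)=\sum_{m\geq 1}f(m)\sum_{\substack{p\leq x\\ p\equiv a\mod{m}}}d\of{\frac{p-a}{m}}
\end{equation*}
(the finitely many primes $p\leq a$ contribute $O(1)$ and are harmless). The inner sum is empty once $m>x-a$, so the outer sum is finite; the task is to evaluate it uniformly in $m$ and then sum the result against $f(m)$.

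Next I would split the range of $m$ at a threshold $M=(\log x)^{C}$, with $C=C(\alpha)$ chosen so that $C>2/\alpha$. For $m\leq M$, Theorem~\ref{thm_Felix2012} evaluates the inner sum as $\tfrac{c_m}{m}\,x$, together with a secondary term of order $\tfrac{1}{m}\Li(x)$ and an error that saves an arbitrary power of $\log x$; since $m\leq(\log x)^{C}$ lies far below the admissible level of distribution, the hypotheses of Felix's theorem are met with room to spare. Summing the leading terms produces
\begin{equation*}
x\sum_{m\leq M}\frac{f(m)c_m}{m}=c_f\cdot x-x\sum_{m>M}\frac{f(m)c_m}{m}.
\end{equation*}
One first checks that $c_f$ is well defined: Felix's constants obey $c_m\ll_\ve m^{\ve}$, so the hypothesis gives $\sum_m\abs{f(m)}c_m/m\ll_\ve\sum_m\abs{f(m)}m^{\ve-1}<\infty$ for any $\ve<\alpha$, and by the same estimate the completing tail satisfies $x\sum_{m>M}f(m)c_m/m\ll xM^{\ve-\alpha}\ll x/\log x$. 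Summed against $f(m)$ over $m\leq M$, the Felix errors contribute $\ll x(\log x)^{-A+O(1)}\ll x/\log x$ on taking the saved power $A$ large, since $\sum_{m\leq M}\abs{f(m)}$ is at most a power of $\log x$; and the secondary terms sum to $\ll\Li(x)\ll x/\log x$, their $m$-aggregate converging absolutely by the same argument.

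For the tail $m>M$ I would simply drop the primality and congruence constraints and bound the inner sum by $\sum_{n\leq x/m}d(n)\ll\tfrac{x}{m}\log x$, which gives
\begin{equation*}
\sum_{m>M}\abs{f(m)}\sum_{\substack{p\leq x\\ p\equiv a\mod{m}}}d\of{\frac{p-a}{m}}\ll x\log x\sum_{m>M}\frac{\abs{f(m)}}{m}\ll xM^{-\alpha}\log x,
\end{equation*}
where I have written $\abs{f(m)}/m=\abs{f(m)}m^{\alpha-1}\cdot m^{-\alpha}$ and used $\sum_{m>M}\abs{f(m)}m^{\alpha-1}\to0$. With $M=(\log x)^{C}$ and $C>2/\alpha$ this is $\ll x/\log x$, and combining the three contributions yields \eqref{eqn_mainthm}.

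The main obstacle is that $m$ ranges all the way up to $\approx x$, whereas Theorem~\ref{thm_Felix2012} is only effective—i.e.\ carries a usable error term—for $m$ small, so one must show that the large-$m$ part, where Felix's result cannot be applied, is negligible. This is exactly where the hypothesis is indispensable: absolute convergence of $F(s)$ on $\Re s\geq 1-\alpha$ forces $\sum_{m>M}\abs{f(m)}m^{\alpha-1}\to0$, hence a power-of-$M$ decay that, with $M$ a suitable power of $\log x$, renders the crude tail $\ll x/\log x$. The leading term $c_f x$ then comes entirely from the small-$m$ range via Felix's theorem, and the ultimate size $x/\log x$ of the error is set not by these tails—which can be made arbitrarily small powers of $\log x$—but by the secondary $\Li(x)$-type term in Felix's asymptotic, which is itself of order $x/\log x$ and is absorbed into the error rather than carried as a second main term. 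Once the growth bound $c_m\ll_\ve m^{\ve}$ is confirmed, the entire reduction rests on the single quantitative input $\sum_m\abs{f(m)}m^{\alpha-1}<\infty$, and the remaining steps are bookkeeping.
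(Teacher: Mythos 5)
Your proposal is correct and follows essentially the same route as the paper: split at $M=(\log x)^{C}$ with $C>2/\alpha$, apply Felix's theorem on the small-$m$ range (completing the main-term sum and absorbing the secondary and power-of-log errors into $O(x/\log x)$), and bound the tail trivially by $\sum_{n\le x/m}d(n)\ll \tfrac{x}{m}\log x$ together with $\sum_{m>M}\abs{f(m)}m^{\alpha-1}<\infty$. The only cosmetic differences are that you bound $c_m\ll_\ve m^\ve$ where the paper uses $c_m\ll\log m$, and you describe Felix's intermediate error as a $\Li(x)$-type secondary term rather than in the exact form $O\bigl(x(\log m)(1+c_m)/(m\log x)\bigr)$, but your treatment of it is equivalent.
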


Taking $f=1_{n=1}$,  we get \eqref{eqn_Titchmarsh} from \eqref{eqn_mainthm}. Hence \eqref{eqn_mainthm} is a generalization of \eqref{eqn_Titchmarsh}. For $k\geq2$, if we take
$$f(n)=\begin{cases}
	\mu(m) & \text{if } n=m^k;\\
	0&\text{otherwise,}
\end{cases}$$
then we get that $\sum_{n=1}^\infty \frac{f(n)}{n^s}=\frac1{\zeta(ks)}$ and $f\ast d=d^{(k)}$. In particular, for $k=2$, we get $(f\ast d)(n)=2^{\omega(n)}$. Clearly, the Dirichlet series of $f(n)$ is absolutely convergent on $\Re s>1/k$. Thus, by Theorem~\ref{mainthm}, we obtain an analogue of \eqref{eqn_Titchmarsh} for the $k$-free divisor function and the unitary divisor function as follows.

\begin{corollary}
\label{maincor}
Let $k\ge2$ be an integer. Let $a$ be an integer. Then there exists some constant $b_k>0$ such that
\begin{equation}
\sum_{p\leq x} d^{(k)}(p-a) =b_k \cdot x+O\left(\frac{x}{\log x}\right),
\end{equation}
where
$$
b_k=\frac{\zeta(2)\zeta(3)}{\zeta(6)}\prod_{p\mid a}\left(1-\frac{p}{p^2-p+1}\right) \prod_p \left(1-\frac{1}{p^{k-2}(p^2-p+1)}\right).
$$

In particular, for $\omega(n)$, the number of distinct prime factors of $n$,  we have
\begin{equation}
\sum_{p\leq x} 2^{\omega(p-a)} =b_2 \cdot x+O\left(\frac{x}{\log x}\right),
\end{equation}
where
$$
b_2=\frac{\zeta(2)\zeta(3)}{\zeta(6)}\prod_{p\mid a}\left(1-\frac{p}{p^2-p+1}\right) \prod_p \left(1-\frac{1}{ p^2-p+1}\right).
$$
\end{corollary}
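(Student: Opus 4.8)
The plan is to obtain Corollary~\ref{maincor} as a direct specialization of Theorem~\ref{mainthm}, so that the only substantive work is the explicit evaluation of the constant. Fix $k \ge 2$ and take
\[
f(n) = \begin{cases} \mu(j) & \text{if } n = j^k, \\ 0 & \text{otherwise,}\end{cases}
\]
as in the paragraph preceding the corollary. Its Dirichlet series is $F(s) = \sum_{j\ge1}\mu(j)j^{-ks} = 1/\zeta(ks)$, which converges absolutely for $\Re s > 1/k$; since $k \ge 2$ forces $1/k \le 1/2 < 1$, the hypothesis of Theorem~\ref{mainthm} holds with any $\alpha \in (0,\,1-1/k)$. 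Comparing Dirichlet series gives $\sum_n (f \ast d)(n)\,n^{-s} = F(s)\zeta^2(s) = \zeta^2(s)/\zeta(ks)$, which by \eqref{eqn_k-free} is exactly the Dirichlet series of $d^{(k)}$; hence $f \ast d = d^{(k)}$, and Theorem~\ref{mainthm} delivers the asymptotic with leading constant $b_k = c_f = \sum_{j\ge1}\mu(j)\,c_{j^k}/j^k$.

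It then remains to evaluate this sum and to verify positivity. I would feed the explicit form of the constants $c_m$ from Theorem~\ref{thm_Felix2012} into $c_f$. The $j=1$ term recovers the classical Titchmarsh constant
\[
c_1 = \frac{\zeta(2)\zeta(3)}{\zeta(6)}\prod_{p\mid a}\left(1 - \frac{p}{p^2-p+1}\right),
\]
and the key structural point I would establish is that $c_m/c_1$ is multiplicative in $m$, so that for squarefree $j$ (the only $j$ contributing, since $\mu(j)\ne0$) one has $c_{j^k}/c_1 = \prod_{p\mid j} g(p)$ with $g(p) = c_{p^k}/c_1$ read off from the local factors of Felix's formula. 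Granting this, the sum over $j$ telescopes into an Euler product,
\[
b_k = c_1 \sum_{j\text{ squarefree}} \frac{\mu(j)}{j^k}\prod_{p\mid j} g(p) = c_1 \prod_p\left(1 - \frac{g(p)}{p^k}\right).
\]
A direct computation of $g(p)$ should give $g(p) = p^2/(p^2-p+1)$, whence $g(p)/p^k = 1/\bigl(p^{k-2}(p^2-p+1)\bigr)$ and the stated product for $b_k$ appears.

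The technical heart---and where I expect the main obstacle---is precisely the extraction of $g(p)$ from Felix's formula together with the proof that $c_m/c_1$ is multiplicative. One must track carefully how the Euler factor of $c_m$ at a prime $p \mid m$ deviates from the generic factor, and in particular handle the interplay with the primes dividing $a$, which already appear in $c_1$; the claim that the resulting local factor is the single clean quantity $p^2/(p^2-p+1)$ for \emph{every} prime $p$ (including $p\mid a$) is the delicate point to confirm. Once it is in place, convergence of $\prod_p\bigl(1 - p^{-(k-2)}(p^2-p+1)^{-1}\bigr)$ is immediate since the factors are $1 - O(p^{-k})$ with $k\ge2$, each factor lies in $(0,1)$, and together with $(p-1)^2/(p^2-p+1) > 0$ and $\zeta(2)\zeta(3)/\zeta(6) > 0$ this yields $b_k > 0$.

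Finally, specializing to $k=2$ turns $1/\bigl(p^{k-2}(p^2-p+1)\bigr)$ into $1/(p^2-p+1)$; since here $(f\ast d)(n) = 2^{\omega(n)}$ as noted in the text, this produces the stated formula for $b_2$ and completes the proof.
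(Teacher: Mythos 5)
Your proposal is correct and takes essentially the same route as the paper, which simply specializes Theorem~\ref{mainthm} to the same choice of $f$ supported on $k$-th powers (the paper does not even spell out the Euler-product evaluation of $b_k$ that you carry out). The one point you flag as delicate is in fact immediate from Felix's formula: the $a$-dependence of $c_m$ sits entirely in the factor $c_1$, while $\prod_{p\mid m}\bigl(1+\tfrac{p-1}{p^2-p+1}\bigr)=\prod_{p\mid m}\tfrac{p^2}{p^2-p+1}$ is multiplicative in the radical of $m$ with the same local factor at every prime (whether or not $p\mid a$), so $c_{j^k}/c_1=\prod_{p\mid j}\tfrac{p^2}{p^2-p+1}$ and your Euler product for $b_k$ follows at once.
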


In Section~\ref{sec_proof}, we will apply a result of Felix   \cite{Felix2012} in 2012 on generalizing the Titchmarsh divisor problem to give a proof of Theorem~\ref{mainthm}. Then in Section~\ref{sec_problems}, we will propose some problems related to our generalization.

\section{Proof of Theorem~\ref{mainthm}}
\label{sec_proof}

 To prove Theorem~\ref{mainthm}, we cite a generalization of \eqref{eqn_Titchmarsh} obtained by Felix \cite{Felix2012} for primes in arithmetic progressions. This result helps us  divide the summation in \eqref{eqn_mainthm} into two parts. The main part consists of summations over small  moduli, which involves the main term in \eqref{eqn_mainthm}.  The  other part consisting of summations over large moduli contributes only an error term in \eqref{eqn_mainthm}.
 
 \begin{theorem}[{\cite[Theorem 1.2]{Felix2012}}]
\label{thm_Felix2012}
Let $m>1$ be an integer, let $a\in\Z$ and $(a,m)=1$, and let $A>0$. Then we have 
\begin{equation}
	\sum_{\substack{p\leq x\\ p\equiv a\mod m}} d\left(\frac{p-a}{m}\right) =\frac{c_m}{m} x +O\of{\frac{x(\log m)(1+c_m)}{m\log x}} + O\of{\frac{x}{(\log x)^A}}
\end{equation}
uniformly for $m\leq (\log x)^{A+1}$, where
$$c_m=\frac{\zeta(2)\zeta(3)}{\zeta(6)}\prod_{p\mid a}\left(1-\frac{p}{p^2-p+1}\right) \prod_{p\mid m}\left(1 + \frac{p-1}{p^2-p+1}\right)$$
and the first $O$-constant is absolute and the second  $O$-constant depends only on $a$ and $A$.
\end{theorem}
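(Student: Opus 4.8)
The plan is to prove the stated asymptotic by the Dirichlet hyperbola method, reducing the divisor sum to a weighted count of primes in arithmetic progressions and then invoking equidistribution results (Bombieri--Vinogradov together with its Bombieri--Friedlander--Iwaniec/Fouvry refinement), while carrying the dependence on $m$ explicitly through every estimate. Writing $n=(p-a)/m$, the basic observation is that, for $p\equiv a\mod m$, one has $\delta\mid n$ if and only if $m\delta\mid p-a$, i.e. $p\equiv a\mod{m\delta}$, so that the number of primes in a progression modulo $m\delta$ is the only arithmetic input. Using the symmetric hyperbola identity $d(n)=2\sum_{\delta\mid n,\ \delta\le\sqrt n}1-\mathbf 1_{[n=\square]}$, and noting that the square term contributes at most $O(\sqrt{x/m})$, I would obtain
\[
\sum_{\substack{p\le x\\ p\equiv a\mod m}} d\of{\frac{p-a}{m}}
=2\sum_{\delta}\set{\pi(x;m\delta,a)-\pi\of{m\delta^2+a;\,m\delta,\,a}}+O\of{\sqrt{x/m}},
\]
where $\delta$ runs up to $\sqrt{x/m}$ and only $\delta$ with $(\delta,a)=1$ contribute, since $(m,a)=1$.

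First I would extract the main term from the range $m\delta\le x^{1/2}/(\log x)^{B}$, replacing $\pi(x;m\delta,a)$ by $\Li(x)/\varphi(m\delta)$. The aggregate replacement error is handled by the Bombieri--Vinogradov theorem; since $m\le(\log x)^{A+1}$ keeps the moduli $m\delta$ admissible, a suitable $B=B(A)$ yields a contribution $O\of{x/(\log x)^{A}}$. The surviving main term is $2\Li(x)\sum_{(\delta,a)=1}\varphi(m\delta)^{-1}$ over the relevant range; its evaluation is a routine Euler-product (Mellin/Perron) computation in which the local factors assemble into $\frac{\zeta(2)\zeta(3)}{\zeta(6)}$, a factor $\prod_{p\mid a}\of{1-\frac{p}{p^2-p+1}}$ forced by the condition $(\delta,a)=1$, and a factor $\prod_{p\mid m}\of{1+\frac{p-1}{p^2-p+1}}$ forced by the multiplier $m$ inside $\varphi(m\delta)$. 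Together with $\Li(x)\sim x/\log x$ and $\log\sqrt{x/m}\sim\tfrac12\log x$, this produces exactly $\frac{c_m}{m}\,x$, and I would check that the subtracted terms $\pi(m\delta^2+a;m\delta,a)$ and the endpoint corrections are of the order of the claimed error.

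The delicate part is the transition range $x^{1/2}/(\log x)^{B}<m\delta\le\sqrt{mx}$, i.e. $\delta$ near $\sqrt{x/m}$, where no individual asymptotic for $\pi(x;m\delta,a)$ is available. A bare application of the Brun--Titchmarsh inequality bounds this range by $\ll\frac{x}{\log x}\,\varphi(m)^{-1}\log\of{m^{1/2}(\log x)^{B}}$, which separates into an acceptable piece of size $\ll\frac{x\log m}{m\log x}$ (the length of the window contributing the factor $\log m$, and $m/\varphi(m)\ll\log\log m\ll 1+c_m$ accounting for the $(1+c_m)$ weight) together with an unacceptable piece of size $\frac{x\log\log x}{\log x}$. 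Removing this classical $\log\log x$ loss is the main obstacle: I would achieve it by extending equidistribution slightly beyond $x^{1/2}$ through a Bombieri--Friedlander--Iwaniec/Fouvry type estimate, valid because the residue $a$ is fixed and one sums (rather than takes a maximum) over moduli, so that the genuinely lossy window shrinks to one contributing only $O\of{x/(\log x)^{A}}$.

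The final and most technical point is to secure all of this uniformly in $m\le(\log x)^{A+1}$ rather than for a single fixed $m$, tracking that every implied constant is independent of $m$ and that the $\varphi(m\delta)$-sums are evaluated with their exact $m$-local Euler factors; it is here, in propagating the $m$-dependence cleanly through the Bombieri--Vinogradov and Brun--Titchmarsh steps so as to land precisely on $O\of{\frac{x(\log m)(1+c_m)}{m\log x}}+O\of{\frac{x}{(\log x)^A}}$, that the real work lies.
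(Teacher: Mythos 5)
First, a point of comparison you could not have known: the paper does not prove this statement at all --- it is quoted verbatim from Felix \cite[Theorem 1.2]{Felix2012} and used as a black box in Section~\ref{sec_proof}. So your attempt must be measured against Felix's original argument, whose skeleton yours reproduces: the symmetric hyperbola identity with the perfect-square correction, the translation $\delta\mid\frac{p-a}{m}\iff p\equiv a\ (\mathrm{mod}\ m\delta)$, the observation that $(\delta,a)>1$ contributes $O_a(1)$ primes per $\delta$, Bombieri--Vinogradov for the moduli $m\delta\le x^{1/2}(\log x)^{-B}$ (with the $\max_{y\le x}$ form handling the subtracted terms $\pi(m\delta^2+a;m\delta,a)$), and a uniform-in-$m$ Euler-product evaluation of $\sum_{(\delta,a)=1}\varphi(m\delta)^{-1}$ assembling exactly the three local factors of $c_m$. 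All of that is correct and standard.

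The genuine gap sits precisely where you locate the difficulty and then wave at it: the window $x^{1/2}(\log x)^{-B}<m\delta\le\sqrt{mx}$. You rightly note that Brun--Titchmarsh alone leaves a loss of size $\frac{x\log\log x}{\varphi(m)\log x}$ --- the window spans $\asymp B\log\log x+\tfrac12\log m$ dyadic blocks, each contributing $\asymp\frac{x}{\varphi(m)\log x}$, and the shrinking intervals $(m\delta^2+a,x]$ only help in the top $O(1)$ blocks --- and that the stated error term, which contains no $\log\log x$, therefore forces equidistribution input beyond $x^{1/2}$. But ``a Bombieri--Friedlander--Iwaniec/Fouvry type estimate, valid because the residue is fixed and one sums over moduli'' is not a citable statement in the form you need: the BFI theorems past $x^{1/2}$ require well-factorable weights $\lambda_q$, and the indicator of a multiplicatively narrow window in $q$, restricted to $q\equiv0\ (\mathrm{mod}\ m)$ with $m$ varying up to $(\log x)^{A+1}$, is not such a weight; Fouvry's theorems carry their own structural hypotheses. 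One must (i) keep all constants uniform in $m$ through that machinery and (ii) extract the matching main terms $\Li(x)/\varphi(m\delta)$ in the window, not merely an upper bound. This step is the entire content of the theorem beyond the classical Titchmarsh--Linnik--Halberstam error $O\left(\frac{x\log\log x}{\log x}\right)$; as written, your argument only delivers the weaker remainder $O\left(\frac{x(\log m+\log\log x)(1+c_m)}{m\log x}\right)+O\left(\frac{x}{(\log x)^A}\right)$. A further, smaller flaw: the chain $m/\varphi(m)\ll\log\log m\ll 1+c_m$ is false in the middle (take $m=2^k$: $\log\log m\to\infty$ while $c_m$ stays bounded); what you need, and what is true, is $m/\varphi(m)\ll_a 1+c_m$, via $c_m\gg_a\prod_{p\mid m}\frac{p^2}{p^2-p+1}\asymp m/\varphi(m)$ --- and since that lower bound on $c_m$ depends on $a$ through $\prod_{p\mid a}\frac{(p-1)^2}{p^2-p+1}$, your bookkeeping does not yet justify the claim that the first $O$-constant is absolute.
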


\begin{proof}[Proof of Theorem~\ref{mainthm}]
Let $B>0$ be a parameter to be chosen later. By $(f\ast d)(n)=\sum_{mq=n}f(m)d(q)$, we have
\begin{align}
	&\quad\sum_{p\leq x} (f\ast d)(p-a) = \sum_{p\leq x} \sum_{mq=p-a}f(m)d(q) \nonumber\\
	&= \sum_{m\leq x-a}f(m) \sum_{\substack{p\leq x\\ mq=p-a}}  d(q) = \sum_{m\leq x-a}f(m) \sum_{\substack{p\leq x\\ p\equiv a\mod{m}}}  d\of{\frac{p-a}{m}} \nonumber\\
	&= \sum_{m\leq (\log x)^B} f(m) \sum_{\substack{p\leq x\\ p\equiv a\mod{m}}}  d\of{\frac{p-a}{m}}  + \sum_{(\log x)^B <m\leq x-a}f(m) \sum_{\substack{p\leq x\\ p\equiv a\mod{m}}}  d\of{\frac{p-a}{m}} \nonumber\\
	&:=S_1+S_2. \label{eqn_mainthm_pf_S1S2}
\end{align}

For $S_1$, by Theorem~\ref{thm_Felix2012}, we have
\begin{align}
	S_1&=\sum_{m\leq (\log x)^B} f(m) \sum_{\substack{p\leq x\\ p\equiv a\mod{m}}}  d\of{\frac{p-a}{m}}\nonumber\\
	&=\sum_{m\leq (\log x)^{B}}f(m)\left\{ \frac{c_m}{m} x +O\of{\frac{x(\log m)(1+c_m)}{m\log x}} + O\of{\frac{x}{(\log x)^{2B}}} \right\} \nonumber\\
	&=\left(\sum_{m\leq (\log x)^{B}} \frac{f(m)c_m}{m} \right) x +O\of{\sum_{m\leq (\log x)^{B}}\frac{x |f(m)|(\log m)(1+c_m)}{m\log x}} \nonumber\\ 
	&\qquad + O\of{\frac{x}{(\log x)^{2B}} \sum_{m\leq (\log x)^{B}}|f(m)|}.  \label{eqn_mainthm_pf_S1-1}
\end{align}

Notice that $$c_m\ll \prod_{p|m}(1+\frac1p)\ll \log m, $$
where the implied constant depends only on $a$. Then for the first term in \eqref{eqn_mainthm_pf_S1-1}, we have
\begin{equation*}
	\sum_{m\leq (\log x)^{B}} \frac{f(m)c_m}{m}= \sum_{m=1}^\infty \frac{f(m)c_m}{m}-\sum_{m > (\log x)^{B}} \frac{f(m)c_m}{m}
\end{equation*}
and
\begin{align*}
	&\quad\sum_{m > (\log x)^{B}} \frac{f(m)c_m}{m}\ll  \sum_{m> (\log x)^{B}} \frac{|f(m)| \log m}{m}\\
	&\ll  \sum_{m> (\log x)^{B}} \frac{|f(m)| \log m}{m} \cdot \frac{m^{\alpha/2}}{(\log x)^{\alpha B/2}} \ll \frac{1}{(\log x)^{\alpha B/2}}.
\end{align*}
Here $\sum_{m=1}^\infty \frac{|f(m)| \log m}{m^{1-\alpha/2}}$ is convergent due to the assumption that the Dirichlet series of $f$ is absolutely convergent at $1-\alpha$. So
\begin{equation} \label{eqn_mainthm_pf_S11}
	\sum_{m\leq (\log x)^{B}} \frac{f(m)c_m}{m}= \sum_{m=1}^\infty \frac{f(m)c_m}{m}+O\of{\frac{1}{(\log x)^{\alpha B/2}}}.
\end{equation}

For the second term in \eqref{eqn_mainthm_pf_S1-1},  we have
\begin{equation} \label{eqn_mainthm_pf_S12}
	\sum_{m\leq (\log x)^{B}}\frac{x |f(m)|(\log m)(1+c_m)}{m\log x}\ll \frac{x}{\log x}  \sum_{m\leq (\log x)^{B}}\frac{ |f(m)|(\log m)^2}{m} \ll \frac{x}{\log x}.
\end{equation}

For the third term in \eqref{eqn_mainthm_pf_S1-1}, we have
\begin{equation} \label{eqn_mainthm_pf_S13}
	\frac{x}{(\log x)^{2B}} \sum_{m\leq (\log x)^{B}}|f(m)| \leq \frac{x}{(\log x)^{2B}} \left(\sum_{m\leq (\log x)^{B}}\frac{|f(m)|}{m} \cdot (\log x)^{B}\right)\ll \frac{x}{(\log x)^{B}}.
\end{equation}

Thus, by \eqref{eqn_mainthm_pf_S1-1}-\eqref{eqn_mainthm_pf_S13}, we obtain that
\begin{equation}\label{eqn_S1}
	S_1=\left(\sum_{m=1}^\infty \frac{f(m)c_m}{m}\right)\cdot x + O\of{\frac{x}{\log x}},
\end{equation}
provided $B\geq\max\set{1,2/\alpha}$.

For $S_2$, we have
\begin{align}
	|S_2|&\leq \sum_{(\log x)^{B}< m\leq x-a}  |f(m)|\sum_{\substack{p\leq x\\ mq=p-a}}  d(q) \nonumber\\
	&\leq  \sum_{m> (\log x)^{B}} |f(m)| \sum_{q\leq \frac{x-a}{m}} d(q) \nonumber\\
	&\ll \sum_{m> (\log x)^{B}} |f(m)| \cdot\frac{x}{m}\log\frac{x}{m} \nonumber\\
	&\ll x\log x \sum_{m> (\log x)^{B}} \frac{|f(m)|}{m}. \label{eqn_mainthm_pf_S2-1}
\end{align}
We estimate the last summation in \eqref{eqn_mainthm_pf_S2-1} as follows:
\begin{equation*}
	\sum_{m> (\log x)^{B}} \frac{|f(m)|}{m}\leq \sum_{m> (\log x)^{B}} \frac{|f(m)|}{m} \cdot \frac{m^\alpha}{(\log x)^{\alpha B}}\ll \frac{1}{(\log x)^{\alpha B}}.
\end{equation*}
It follows that
\begin{equation}\label{eqn_S2}
	S_2=O\of{\frac{x}{\log x}},
\end{equation}
provided $B\ge2/\alpha$. 

Therefore, taking $B\geq\max\set{1,2/\alpha}$, we obtain \eqref{eqn_mainthm} by combining \eqref{eqn_mainthm_pf_S1S2}, \eqref{eqn_S1}, and \eqref{eqn_S2} together. 	
\end{proof}

\section{Related problems}
\label{sec_problems}

Let $P(n)=\frac1n\sum_{k=1}^n\gcd(k,n)$ be the proper Pillai's function, then $P(n)=\sum_{m \mid n} \varphi(m)/m$, where $\varphi(m)$ is the Euler totient function, and
\begin{equation}
	\sum_{n=1}^\infty \frac{P(n)}{n^s}=\frac{\zeta^2(s)}{\zeta(s+1)}
\end{equation}
for $\Re >1$, see \cite{KNT2012}. The analysis in Theorem~\ref{mainthm} shows that there exists some constant $c_P>0$ such that
\begin{equation}
	\sum_{p\leq x}P(p-a)=c_P\cdot x+O\of{\frac{x}{\log x}}.
\end{equation}
This is another application of  Theorem~\ref{mainthm}. 

In the literature, there are a number of generalizations and analogues of the Titchmarsh divisor problem.  In the end, motivated by the asymptotic formula in Theorem~\ref{mainthm}, we propose a list of problems related to these generalizations and analogues for the interested readers. Let $f:\N\to \C$ be an arithmetic function satisfying that its Dirichlet series is absolutely convergent on $\Re s\ge 1-\alpha$ for some $\alpha>0$. Let $g=f\ast d$. For example, one may take $g=2^{\omega(n)}, d^{(k)}(n)$ or $P(n)$ as in this paper. The following is our problem list.
\begin{enumerate} 
	\item Estimate the summation
	$$\sum_{p_1p_2\cdots p_k \leq x}g(p_1p_2\cdots p_k-1),$$
	c.f. \cite{KS1968, Fujii1976}.
	
	\item Find an analogue of Corollary~\ref{maincor} over finite fields, c.f. \cite{ABR2015, DeySavalia2023}. 
	\item Under the Elliott-Halberstam conjecture, estimate
	$$\sum_{\substack{p\leq x\\ P^+(p-a)\ge y}}g(p-a),$$
	where $P^+(n)$ denotes the largest prime factor of $n$,	c.f. \cite{Wu2018}.
	\item Estimate  the summation
	$$\sum_{p\leq x}g(p^2+1),$$
		c.f. \cite{Xi2018}.
	\item Estimate  the summation
	$$\sum_{p^2+q^2\leq x}g(p^2+q^2+1),$$
	where $p,q$ belong to the set of primes.	 c.f. \cite{Li2020}.

\end{enumerate}


\end{document}